\newtheorem{problem}{Problem}
\newtheorem{lemma}{Lemma}
\newtheorem{remark}{Remark}
\newcommand{\norm}[1]{\left\lVert#1\right\rVert}
\newcommand{\R}{\mathbb{R}}
\newcommand{\diff}{\mathrm{d}}
\crefname{align}{Eq.}{Eqs.}
\crefname{equation}{Eq.}{Eqs.}
\crefname{figure}{Fig.}{Figs.}
\crefname{table}{Table}{Tables}
\crefname{theorem}{Theorem}{Theorems}
\crefname{definition}{Definition}{Definitions}
\crefname{lemma}{Lemma}{Lemmas}
\crefname{assumption}{Assumption}{Assumptions}
\crefname{proof}{Proof}{Proofs}
\crefname{remark}{Remark}{Remarks}
\crefname{problem}{Problem}{Problems}
\crefname{proposition}{Proposition}{Propositions}
\crefname{corollary}{Corollary}{Corollaries}
\crefname{section}{Section}{Sections}
\title{
Smooth Indirect Solution Method for
State-constrained Optimal Control Problems
with Nonlinear Control-affine Systems
}
\author{Kenshiro Oguri 
\thanks{K.~Oguri is with the School of Aeronautics and Astronautics, Purdue University, IN 47907, USA ({\tt koguri@purdue.edu}).
}}
\newcommand\copyrighttext{%
  \footnotesize \textcopyright 2023 IEEE. Personal use of this material is permitted.
  Permission from IEEE must be obtained for all other uses, in any current or future
  media, including reprinting/republishing this material for advertising or promotional
  purposes, creating new collective works, for resale or redistribution to servers or
  lists, or reuse of any copyrighted component of this work in other works.
  DOI: \underline{\href{https://ieeexplore.ieee.org/document/10383623}{10.1109/CDC49753.2023.10383623}}.
  Presented at 2023 IEEE CDC.
  }
\newcommand\copyrightnotice{%
\begin{tikzpicture}[remember picture,overlay]
\node[anchor=south,yshift=10pt] at (current page.south) {\fbox{\parbox{\dimexpr\textwidth-\fboxsep-\fboxrule\relax}{\copyrighttext}}};
\end{tikzpicture}%
}
\begin{document}

\maketitle
\thispagestyle{empty}
\pagestyle{empty}
\copyrightnotice

\begin{abstract}
This paper presents an indirect solution method for state-constrained optimal control problems to address the long-standing issue of discontinuous control and costate under state constraints.
It is known in optimal control theory that a state inequality constraint introduces discontinuities in control and costate, rendering the classical indirect solution methods ineffective.
This study re-examines the necessary conditions of optimality for a class of state-constrained optimal control problems, and shows the uniqueness of the optimal control that minimizes the Hamiltonian under state constraints, which leads to a unifying form of the necessary conditions.
The unified form of the necessary conditions opens the door to addressing the issue of discontinuities in control and costate by modeling them via smooth functions.
This paper exploits this insight to transform the originally discontinuous problems to smooth two-point boundary value problems that can be solved by existing nonlinear root-finding algorithms.
This paper also shows the formulated solution method to have an anytime algorithm-like property, and then numerically demonstrates the solution method by an optimal orbit control problem.
\end{abstract}


\section{Introduction}
\label{sec:intro}

This paper presents a new indirect solution method for state-constrained optimal control problems to address the long-standing issue of discontinuous costate and control under state constraints.
It is widely known that imposing pure state path constraints, $S(x, t) \leq0 $, in optimal control problems introduces discontinuities in control and costate \cite{Hartl1995,Bryson1975,Speyer1968}.
Despite such difficulties, state-constrained optimal control problems arise in many fields, including aerospace, robotics, and other engineering and science applications.

In practice, many state-constrained optimal control problems are solved via direct methods.
Direct methods convert continuous-time optimal control problems into finite-dimensional problems by parameterizing the problems via multiple shooting \cite{Bock1984}, numerical collocation \cite{Hargraves1987a}, or exact time discretization \cite{Szmuk2020}.
The resulting parameter optimization problems are solved via nonlinear programming \cite{Nocedal2006}, sequential convex programming \cite{Mao2016}, model predictive control \cite{Camacho2007}, etc.
Many successful commercial software for solving general optimal control problems, such as GPOPS \cite{Patterson2014a}, fall under the category of direct methods.
To incorporate inequality constraints in the direct method framework, they almost always employ the penalty function method and its variants \cite{DENHAM1964,Teo1989}, which impose soft constraints by penalizing the violation of constraints and increase the penalty weight over iterations, asymptotically achieving the feasibility and optimality.
These methods are usually combined with techniques in numerical optimization, such as the interior point method \cite{Nocedal2006} and the augmented Lagrangian method \cite{Bertsekas1982}.

Although direct methods provide flexibility in solving a variety of problems, they often compromise the feasibility and/or optimality due to the nature of the penalty function method \cite{DENHAM1964,Teo1989} and problem parameterization, which often involves approximation \cite{Bock1984,Hargraves1987a,Camacho2007}.
The computational complexity can be also an issue due to the large number (hundreds to thousands) of variables introduced by the parameterization \cite{Bock1984,Hargraves1987a,Mao2016}.
In contrast, indirect methods possess an advantage of the fewer dimensionality of the problem (${<}$10 unknowns).
The fewer dimensionality is attractive particularly for complex control problems, such as those involving uncertainty \cite{Oguri2022c} and multiple phases \cite{Sidhoum2023}.

Motivated by these facts, this paper revisits the use of indirect methods.
However,
classical indirect methods struggle to solve state-constrained problems.
Due to the discontinuities in control and costate resulting from Pontryagin's minimum principle \cite{Bryson1975}, classical approaches divide the entire trajectory into sub-arcs and solve each sub-arc as separate two-point boundary value problems (TPBVPs) while satisfying the transversality conditions \cite{Hartl1995,Bryson1975,Speyer1968}.
This approach requires \textit{a priori} information about the sub-arc structure (e.g., number of constrained arcs), which is usually unknown in practice.

To address the issue, some recent studies have developed indirect solution methods with various approaches, such as
using saturation function for problems with lower/upper bounds on state \cite{Graichen2010};
introducing slack variables and penalty function \cite{Fabien2014};
deriving a variant of the minimum principle by requiring the continuity of Lagrange multipliers associated with state constraints \cite{Arutyunov2015,Chertovskih2021}.
On the other hand, like any approaches, they have their own assumptions and limitations (e.g., type of constraints considered, continuity).

This study tackles state-constrained optimal control via a different indirect method approach that explicitly addresses the discontinuities arising from the minimum principle.
First, we revisit necessary conditions of optimality for constrained problems and show the uniqueness of optimal control that minimizes the constrained control Hamiltonian.
This analysis leads to a unifying form of optimality necessary conditions, which enables us to 
address the issue of discontinuities in control and costate by modeling them via smooth functions with a sharpness parameter $\rho$.
This approach transforms the originally discontinuous problem to a smooth TPBVP that can be solved by existing root-finding algorithms, where the sharpness parameter $\rho$ controls the optimality of the resulting solutions.
It is also shown that intermediate solutions of the continuation process respect the state constraint, which is often a desirable property for safety-critical systems.


\section{Preliminary}

\subsection{Problem Statement}
\label{sec:problem}

Consider a nonlinear, control-affine dynamical system:
\begin{align}
\dot{x} = f(x,u,t) = f_0(x, t) + F(x,t) u 
\label{eq:dynamics}
\end{align}
where $x\in\R^{n} $, $u\in\R^{m} $, and $t\in\R$ are the state, control, and time, respectively;
this system arises in many aerospace \cite{Conway2010,Oguri2019b} and robotic systems \cite{Bonalli2022a}.
$F(\cdot)$ is twice continuously differentiable, which holds for many practical problems (e.g., for most robotic/aerospace problems, $F = [0_{m\times m}, I_m]^\top $).

Without loss of generality, consider the cost functional in the Lagrange form%
\footnote{The other forms of cost, the Mayer and Bolza forms, can be converted to the Lagrange form.
Also, a time interval $\tau\in [\tau_0, \tau_f]$ can be transformed to $t\in[0, T] $ via $t = \tau - \tau_0$ and $T = \tau_f - \tau_0 $.}:
\begin{align}
J = \int_{0}^{T} L(x,u,t)\diff t.
\label{eq:originalObjective}
\end{align}
A feasible trajectory must satisfy terminal constraints
\begin{align}
\psi(x(0), x(T), T) = 0
\label{eq:terminalConst}
\end{align}
where $\psi: \R^n\times \R^n\times \R \mapsto \R^q $,
and a scalar state constraint
\begin{align}
S(x, t) \leq 0.
\label{eq:pathConst}
\end{align}
$S$ is assumed to be twice continuously differentiable and first order, implying that the first time derivative of $S$ contains the control term $u$ explicitly, i.e.,
\begin{align}
\begin{aligned}
S^{(1)}(x, u, t)\triangleq 
\dot{S} = S_x f + S_t 
= S_x f_0 + S_xFu + S_t
\label{eq:S1}
\end{aligned}
\end{align}
where $S_x\in\R^{1\times n}, S_t\in\R $ are the gradients of $S$ with respect to $x$ and $t$, i.e., $S_x \triangleq \nabla_x S $ and $S_t \triangleq \nabla_t S $.
The first-order constraint assumption is thus equivalent to $S_x F \neq 0,\ \forall x,t $.
Note that this assumption may limit the applicability of the proposed method, as $S_x F = 0$ for some practical constraints.

Thus, our original problem is formulated as in \cref{prob:originalOCP}.
\begin{problem}[Original problem]
\label{prob:originalOCP}
Find $x^{*}(t), u^{*}(t), T^*$ that minimize the cost \cref{eq:originalObjective} subject to the dynamics \cref{eq:dynamics}, terminal constraints \cref{eq:terminalConst}, and state constraint \cref{eq:pathConst}.
\end{problem}

Although the analysis in this study is focused on a scalar state constraint $S$ for conciseness, generalization to a vector-valued state constraint $S$ is straightforward by requiring the following condition in addition to $S_x F \neq 0,\ \forall x,t $;
for $S\in\R^{l}$ with $l>1$, rows of $S_x F$ are independent (similar to the linear independence constraint qualification, or LICQ, which is often assumed in nonlinear programming algorithms \cite{Nocedal2006}).

\subsection{Optimality Necessary Conditions}
\label{sec:necessaryCondition}

The necessary conditions of optimality take different forms depending on whether the trajectory is on a constrained arc ($S=0 $) or on an unconstrained arc ($S<0 $) \cite{Bryson1975}.
We use $u_o^* $ to refer to the optimal control on an unconstrained arc while $u_c^* $ to that on a constrained arc.
$u^*$ collectively denotes the optimal control regardless of constrained/unconstrained arc.

Clearly, a trajectory is characterized as follows:
\begin{align} \begin{aligned}
\begin{cases}
\mathrm{unconstrained\ arc} 	& \mathrm{if\ } S < 0 \\
\mathrm{constrained\ arc} 		& \mathrm{if\ } S = 0\ \land\ S^{(1)}_o \geq 0 \\
\mathrm{infeasible} 			& \mathrm{if\ } S > 0 
\end{cases}
\label{eq:trajectoryClasification}
\end{aligned} \end{align}
where $S^{(1)}_o $ is a shorthand notation of $S^{(1)}(\cdot) $ under $u_o^* $, i.e., $S^{(1)}_o \triangleq S^{(1)}(x, u_o^*, t) $.
When $S = 0$ and $S^{(1)}_o \leq 0$ at $t=t_i$, the trajectory is leaving the constrained arc, called an exit point.
Likewise, $S = 0$ and $S^{(1)}_o > 0$ at an entry point.

\subsubsection{Unconstrained arc}
\label{sec:NCunconst}

On an unconstrained arc, from Pontryagin's minimum principle, $u_o^*$ must satisfy
\begin{align}
u_o^* \in \arg\min_{u} H_o
\label{eq:unconstOptimalControl}
\end{align}
$H_o $ is the unconstrained control Hamiltonian, defined as:
\begin{align} \begin{aligned}
H_o(x,u,\lambda,t) = L(x,u,t) + \lambda^\top [f_0(x,t) + F(x,t)u]
\label{eq:Hamiltonian}
\end{aligned} \end{align}
where $\lambda\in\R^{n}$ is the Lagrange multiplier associated with \cref{eq:dynamics}, or costate.
The costate dynamics are given by
\begin{align}
- \dot{\lambda}^\top   
= \nabla_x H_o
= \nabla_x L + \lambda^\top \nabla_x f
\label{eq:unconstainedCostateDynamics}
\end{align}

\subsubsection{Constrained arc}
\label{sec:NCconst}

On a constrained arc, a feasible control must satisfy $S^{(1)}(x, u, t) \leq 0$.
Among several approaches in literature \cite{Hartl1995,Bryson1975}, this study takes an approach known as \textit{indirect adjoining} to derive necessary conditions.%
 
The indirect adjoining approach adjoins the control-dependent constraint $S^{(1)}(x,u,t) \leq 0 $ in addition to \cref{eq:dynamics,eq:terminalConst} to yield the control Hamiltonian \cite{Bryson1975}:
\begin{align} \begin{aligned}
H_c(x,u,\lambda,\mu,t) = L + \lambda^\top f + \mu S^{(1)}
\label{eq:HamiltonianConst}
\end{aligned} \end{align}
where $H_c$ is the Hamiltonian on a constrained arc;
$\mu (\geq 0) $ is a Lagrange multiplier associated with $S^{(1)} \leq 0$.
Denoting the optimal multiplier by $\mu^*$, the following must hold \cite{Bryson1975}:
\begin{align}
(u_c^*, \mu^*)     \in\{u, \mu \geq0 \mid \min_{u,\mu} H_c\ \text{s.t.\ } S^{(1)} = 0 \}
\label{eq:constrainedOptimalControl}
\end{align}
The costate dynamics are given by
\begin{align*}
- \dot{\lambda}^\top   
= 
\nabla_x H_c(x, u, \lambda, \mu, t) 
= \nabla_x L + \lambda^\top \nabla_x f + \mu \nabla_x S^{(1)}
\end{align*}


\subsubsection{Discontinuity at corners}
\label{sec:jumpConditionDet}

It is known that $\mu^*$ may be discontinuous at the time of entry to a constrained arc, called a \textit{corner}%
\footnote{In fact, one may also choose the discontinuity to occur at the exit corner or distribute at both of the entry and exit corners (e.g., \cite{Hartl1995}). This paper chooses to have the corner discontinuity at entry corners.}.
At a corner (say $t=t_i$), $\lambda$ and $H$ obey \cite{Bryson1975,McINTYRE1967}:
\begin{align} \begin{aligned}
\Delta \lambda_i &\triangleq 
\lambda(t_i^+) - \lambda(t_i^-)
= - \mu(t_i^+) [S_x(t_i)]^\top,\\
\Delta H_i &\triangleq 
H(t_i^+) - H(t_i^-)
= \mu(t_i^+) S_t(t_i)
\label{eq:costateJump}
\end{aligned} \end{align}


\section{Uniqueness of Optimal Control and Multiplier}
\label{sec:uniqueness}

Although \cref{eq:constrainedOptimalControl} states necessary conditions for $(u^*, \mu^*) $ to satisfy, it does not clarify whether such $(u^*, \mu^*) $ is unique.
The uniqueness of $(u^*, \mu^*) $ is crucial for numerically solving optimal control problems via indirect methods.
Numerical solutions for problems with no such uniqueness guarantee typically have to rely on direct methods (e.g., GPOPS \cite{Patterson2014a}) with greater computational complexity.

\subsection{Optimal Control on Unconstrained Arcs}
\label{sec:assumption}
We assume that the unique solution to \cref{eq:unconstOptimalControl} is available in a closed form and continuously differentiable by satisfying the sufficient condition for the minimum Hamiltonian (known as the Legendre-Clebsch condition \cite{Bryson1975}):
\begin{align}
\begin{aligned}
&\nabla_u H_o = \nabla_u L(x, u, t) + \lambda^\top F = 0\quad  \land \\ 
&\nabla^2_{uu} H_o = \nabla_{uu}^2 L(x, u, t) \succ 0
\end{aligned}
\label{eq:Legendre-Clebsch}
\end{align}
Note that this assumption holds for many problems that are solved by indirect methods, as demonstrated in \cref{sec:analyticalExample}.
We generically express $u_o^*$ that satisfies \cref{eq:Legendre-Clebsch} as:
\begin{align}
u_o^* = g(x,\lambda,t)
\label{eq:unconstSolution}
\end{align}

\subsection{Optimal Control on Constrained Arcs}
\label{sec:constrainedArc}

It can be shown that, with the assumption made in \cref{sec:assumption}, \cref{prob:originalOCP} has the unique pair of $(u_c^*, \mu^*)$ $\forall t$ on constrained arcs.
\cref{lemma:uniqueConstrainedSolution} formally states this fact.

\begin{lemma}
\label{lemma:uniqueConstrainedSolution}
Under the assumption that the unique solution to \cref{eq:unconstOptimalControl} is available in a closed form \cref{eq:unconstSolution}, the solution to \cref{eq:constrainedOptimalControl} is uniquely determined in the following form:
\begin{align}
u_c^* =& g(x,\lambda + \mu S_x^\top,t),
\label{eq:constSolution}
\\
\mu^* =& h(x,\lambda,t) \geq 0,
\label{eq:constSolution-mu}
\end{align}
where $h$ is continuously differentiable.
\end{lemma}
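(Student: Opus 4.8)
The plan is to reduce the constrained stationarity condition to the unconstrained one through a shift of the costate, and then to pin down $\mu^*$ by a scalar implicit equation whose strict monotonicity delivers uniqueness, smoothness, and the correct sign all at once.

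First I would write out the stationarity condition for $H_c$ in $u$. Since $S$ is first order, $S^{(1)}$ is affine in $u$ with $\nabla_u S^{(1)} = S_x F$ and $\nabla_{uu}^2 S^{(1)} = 0$, and $\lambda^\top f$ is likewise affine in $u$, so
\begin{align*}
\nabla_u H_c = \nabla_u L + \lambda^\top F + \mu S_x F = \nabla_u L + (\lambda + \mu S_x^\top)^\top F, \qquad \nabla_{uu}^2 H_c = \nabla_{uu}^2 L \succ 0.
\end{align*}
Strict convexity in $u$ makes the stationary point the unique minimizer for each fixed $\mu$. Crucially, $\nabla_u H_c = 0$ is exactly the unconstrained optimality condition \cref{eq:Legendre-Clebsch} with $\lambda$ replaced by $\lambda + \mu S_x^\top$; by the assumed uniqueness of $g$, this immediately gives $u_c^* = g(x,\lambda + \mu S_x^\top,t)$, i.e.\ \cref{eq:constSolution}. (On the surface $S^{(1)}=0$ the multiplier term of $H_c$ vanishes, so $\mu$ is precisely the Lagrange multiplier of the equality-constrained problem $\min_u [L+\lambda^\top f]$ subject to $S^{(1)}=0$.)

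Next I would fix $\mu^*$ by enforcing $S^{(1)}=0$ along the arc. Substituting \cref{eq:constSolution} defines the scalar residual
\begin{align*}
\Phi(\mu;x,\lambda,t) \triangleq S_x f_0 + S_x F\, g(x,\lambda + \mu S_x^\top, t) + S_t,
\end{align*}
and $\mu^*$ must solve $\Phi = 0$. Implicit differentiation of the defining identity $\nabla_u L(x,g,t) + (\cdot)^\top F = 0$ for $g$ yields $g_\lambda = -(\nabla_{uu}^2 L)^{-1} F^\top$, so
\begin{align*}
\frac{\partial \Phi}{\partial \mu} = S_x F\, g_\lambda\, S_x^\top = -(S_x F)(\nabla_{uu}^2 L)^{-1}(S_x F)^\top < 0,
\end{align*}
where strict negativity combines $\nabla_{uu}^2 L \succ 0$ with the first-order assumption $S_x F \neq 0$. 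Strict monotonicity gives at most one root, proving uniqueness of $\mu^*$; and since $\partial \Phi/\partial\mu \neq 0$ with $\Phi$ continuously differentiable (as $g$ is $C^1$ and $S$ is twice continuously differentiable), the implicit function theorem produces $\mu^* = h(x,\lambda,t)$ with $h$ continuously differentiable, i.e.\ \cref{eq:constSolution-mu}.

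Finally I would recover the sign. Evaluating at $\mu=0$ gives $\Phi(0) = S^{(1)}(x,g(x,\lambda,t),t) = S^{(1)}_o$, so because $\Phi$ decreases strictly and $\Phi(\mu^*)=0$, the sign of $\mu^*$ matches that of $S^{(1)}_o$; the defining condition of a constrained arc in \cref{eq:trajectoryClasification}, namely $S=0 \ \land\ S^{(1)}_o \geq 0$, then forces $\mu^* \geq 0$. I expect this sign step to be the main obstacle: stationarity and the active-constraint equation by themselves only fix the magnitude of $\mu^*$, so nonnegativity has to be argued by tying the monotonicity of $\Phi$ to the constrained-arc classification rather than extracting it from the first-order conditions in isolation.
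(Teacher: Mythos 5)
Your proposal is correct and follows essentially the same route as the paper's proof: identifying $u_c^*$ via the shifted costate $\lambda + \mu S_x^\top$ in the Legendre--Clebsch conditions, reducing $\mu^*$ to the scalar equation $S^{(1)}=0$, establishing strict monotonicity through $\nabla_\lambda g = -[\nabla_{uu}^2 L]^{-1}F^\top$, and obtaining nonnegativity by evaluating at $\mu=0$ where $S^{(1)}=S^{(1)}_o\geq 0$ on a constrained arc per \cref{eq:trajectoryClasification}. If anything, two of your micro-steps are tighter than the paper's: you derive $\nabla_\lambda g$ by directly implicitly differentiating the stationarity identity (avoiding the paper's inference that $S_x\{[\nabla_\lambda g]^\top \nabla_{uu}^2 L + F\}=0$ forces the bracket itself to vanish, which a single row-vector equation does not strictly entail), and you justify the continuous differentiability of $h$ via the implicit function theorem with $\partial\Phi/\partial\mu\neq 0$, where the paper merely asserts it from the smoothness of $S$, $f_0$, $F$, and $g$.
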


\begin{proof}
The goal is to find the unique pair of $u_c^*$ and $\mu*$ that solves the following Hamiltonian minimization:
\begin{align}
(u_c^*, \mu^* ) = \arg \min_{\mu\geq 0} \min_u H_c&\ \text{s.t.\ } S^{(1)} = 0
\label{eq:constrainedHamiltonianCondition}
\end{align}
where $H_c$ is given by \cref{eq:HamiltonianConst}.
Expanding $H_c$, we have
$H_c = L + (\lambda^\top + \mu S_x)(f_0 + Fu) + \mu S_t$.
The Legendre-Clebsch condition for $H_c$ leads to
\begin{subequations}
\begin{align}
&\nabla_u H_c = \nabla_u L(x, u, t) + (\lambda^\top + \mu S_x)F = 0
\label{eq:Legendre-Clebsch1}
\quad \land\\
&\nabla^2_{uu} H_c = \nabla_{uu}^2 L(x, u, t) \succ 0
\label{eq:Legendre-Clebsch2}
\end{align}
\label{eq:Legendre-Clebsch12}
\end{subequations}
Noting \cref{eq:Legendre-Clebsch}, it is clear that $u_c^*$ that satisfies \cref{eq:Legendre-Clebsch12} is given by \cref{eq:constSolution}.
Next, note that \cref{eq:Legendre-Clebsch1,eq:Legendre-Clebsch2} must be satisfied for any $\mu$.
Thus, the partial derivative of $\nabla_u H_c $ with respect to $\mu$ must be identically zero, i.e.,
\begin{align}
\begin{aligned}
0 =&
\nabla_{\mu} [\nabla_u H_c] = 
\nabla_{\mu u}^2 L(x, u_c^*, t) + S_xF \\ =& 
S_x\left\{\left[\nabla_{\lambda} g(x, \lambda + \mu S_x^\top, t) \right]^\top\nabla_{uu}^2 L(x, u_c^*, t) + F \right\} 
\nonumber
\end{aligned}
\end{align}
which implies that $[\nabla_{\lambda} g(x, \lambda + \mu S_x^\top, t) ]^\top\nabla_{uu}^2 L(x, u_c^*, t) + F = 0$, yielding ($\because \nabla_{uu}^2 L(x, u_c^*, t) \succ 0$, so invertible)
\begin{align}
\begin{aligned}
\nabla_{\lambda} g(x, \lambda + \mu S_x^\top, t) = - [\nabla_{uu}^2 L(x, u_c^*, t)]^{-1} F^\top
\label{eq:gradientControlLambda}
\end{aligned}
\end{align}
\cref{eq:gradientControlLambda} becomes handy shortly.

From \cref{eq:constrainedHamiltonianCondition}, $\mu^*$ must satisfy $S^{(1)} = 0$ under $u_c^*$, i.e.,
\begin{align}
S^{(1)} = S_x f_0 + S_xFg(x,\lambda + \mu^* S_x^\top,t) + S_t = 0
\label{eq:S1=0condition}
\end{align}
Differentiating $S^{(1)}$ with respect to $\mu$ and using \cref{eq:gradientControlLambda},
\begin{align}
\begin{aligned}
\nabla_{\mu} S^{(1)} =& S_xF [\nabla_\mu g(x,\lambda + \mu S_x^\top,t)] \\
=& S_x F [\nabla_\lambda g(x,\lambda + \mu S_x^\top,t)] S_x^\top \\
=& - S_x F [\nabla_{uu}^2 L(x, u_c^*, t)]^{-1} F^\top S_x^\top
\end{aligned}
\end{align}
which implies $\nabla_{\mu} S^{(1)} <0 $ because $[\nabla_{uu}^2 L(x, u_c^*, t)]^{-1} \succ 0 $ due to \cref{eq:Legendre-Clebsch2} and $S_xF\neq0 $, $\forall x,t$ as discussed in \cref{sec:problem}.
Thus, $S^{(1)} $ is monotonically decreasing in $\mu$.
Note also that, when $\mu=0$, we have that $g(x,\lambda + \mu S_x^\top,t) = g(x,\lambda,t) = u_o^* $ and hence $S^{(1)} = S^{(1)}_o\geq 0 $ on a constrained arc (recall \cref{eq:trajectoryClasification}).
Therefore, $\mu^*$ that satisfies \cref{eq:S1=0condition} is unique and must be non-negative, and the unique solution can be represented by a function $\mu^* = h(x,\lambda,t) $, i.e., \cref{eq:constSolution-mu}.
Finally, $h(x,\lambda,t)$ is continuously differentiable because $S(\cdot)$, $f_0(\cdot)$, $F(\cdot)$, and $g(\cdot)$ are twice differentiable.
\end{proof}

\subsection{Unified Form of Optimality Necessary Conditions}
Thus, we can express the control Hamiltonian and necessary conditions of optimality for \cref{prob:originalOCP} in a unified form without needing to separate the discussion, as follows:
\begin{subequations}
\begin{align}
&H(x,u,\lambda,\mu,t) = L(\cdot) + \lambda^\top f(\cdot) + \mu S^{(1)}(\cdot),
, \label{eq:unifiedHamiltonian}\\
&u^* 					= g(x,\lambda + \mu^* S_x^\top,t)
, \label{eq:unifiedOptimalControl}\\
&\dot{\lambda} 			= - [\nabla_x H(x, u^*, \lambda, \mu^*, t) ]^\top, 
\label{eq:unifiedCostateDynamics} \\
&\mu^* =
\begin{cases}
h(x,\lambda,t) &\mathrm{if}\ S=0 \ \land\ S^{(1)}_o \geq 0 \\
0 & \mathrm{otherwise}
\end{cases}
\label{eq:discontinuousConstraintMultiplier}
\end{align}
\label{eq:unifiedNecessaryCondition}%
\end{subequations}
where \cref{eq:unifiedHamiltonian,eq:unifiedOptimalControl,eq:unifiedCostateDynamics} give the unconstrained arc solution by substituting $\mu^* = 0$;
\cref{eq:discontinuousConstraintMultiplier} is based on \cref{eq:trajectoryClasification}.
This is in sharp contrast to the classical discussion in literature (e.g., \cite{Bryson1975}), which divides a state-constrained optimal control problem into constrained and unconstrained arcs.

\begin{remark}
\label{remark:discontinuity}
Even with the unifying form given by \cref{eq:unifiedNecessaryCondition}, \cref{prob:originalOCP} still experiences discontinuities in $\mu^*$ (and hence in $\lambda^*$ and $H^*$) when the trajectory enters a constrained arc.
\end{remark}

\cref{sec:smoothApprox} addresses the issue identified in \cref{remark:discontinuity}.

\subsection{Analytical Example}
\label{sec:analyticalExample}

Let us consider an example to demonstrate the wide applicability of the assumption made in \cref{sec:assumption} as well as the procedure to find $u^*$ and $\mu^*$ on constrained arcs based on \cref{lemma:uniqueConstrainedSolution}.
This example considers $L(\cdot)$ in the form of
$L(\cdot) = c(x,t) + u^\top Ru + x^\top P u$,
where $R\succ 0 $.
With this, one can find the solution to \cref{eq:unconstOptimalControl} by solving \cref{eq:Legendre-Clebsch} as:
\begin{align}
u_o^* = -\frac{1}{2} R^{-1}(Px + F^\top \lambda)
\label{eq:unconstSolutionExample}
\end{align}
which is continuously differentiable ($\because F$ is continuously differentiable), hence satisfying the assumption made in \cref{sec:assumption}.
It is also straightforward to derive $u^*_c$ by applying \cref{eq:Legendre-Clebsch1,eq:Legendre-Clebsch2}, yielding:
\begin{align}
u_c^* =& -\frac{1}{2} R^{-1}[Px + F^\top (\lambda + \mu^* S_x^\top)]
\label{eq:u-optimal}
\end{align}
which takes the form given in \cref{eq:constSolution}.
Substituting \cref{eq:u-optimal} into \cref{eq:S1=0condition} and solving for $\mu^*$ yields
\begin{align}
\mu^* =& 
\frac{2S_xf_0 + 2S_t - S_x F R^{-1}(Px + F^\top \lambda)}{S_x F R^{-1} F^\top S_x^\top}
\label{eq:mu-optimal}
\end{align}
where $S_x F R^{-1} F^\top S_x^\top \neq0 $ since $S_xF\neq 0 $ and $R \succ 0$.
\cref{eq:mu-optimal} is continuously differentiable because $S(\cdot)$, $f_0(\cdot)$, and $F(\cdot)$ are twice differentiable.


\section{Solution Method}
\label{sec:smoothApprox}

This section presents the proposed solution method to address discontinuities in \cref{prob:originalOCP} and analyzes its properties.

\subsection{Smooth Approximation of Discontinuous Multiplier}
\label{sec:smoothApproxMultiplier}

Let us address the discontinuity issue stated in \cref{remark:discontinuity}.
As clear from \cref{eq:discontinuousConstraintMultiplier}, the discontinuity is triggered by the two conditions $S=0$ and $S^{(1)}_o  \geq 0$.
Hence, this study models $\mu^*$ via $\tilde{\mu}^*$ with two smooth activation functions as:
\begin{align} \begin{aligned}
\tilde{\mu}^*     &=  h(x,\lambda,t) \phi_1(S) \phi_2(S^{(1)}_o),
\label{eq:muApprox}
\end{aligned} \end{align}
where $\phi_1(\cdot) $ and $\phi_2(\cdot) $ aim to smoothly characterize the activation via $S=0$ and $S^{(1)}_o  \geq 0$, respectively.

There are three guiding principles in designing $\phi_1 $ and $\phi_2 $:
(i) for constraint satisfaction, both $\phi_1 $ and $\phi_2$ should take unity when $S=0$ and $S^{(1)}_o \geq 0$;
(ii) $\phi_2$ should remain unity for any values of $S^{(1)}_o \geq 0$ since $S^{(1)}_o$ can be arbitrarily large on constrained arcs;
(iii) for the approximation to be asymptotically precise, $\phi_1$ should be close to zero when $S<0 $ while $\phi_2$ should be close to zero when $S^{(1)}_o < 0$.

Based on these guiding principles, we choose a hyperbolic tangent-based function (a variant of the logistic function) and a variant of the bump function to design $\phi_1 $ and $\phi_2 $, respectively.
Formally defining these activation functions,
\begin{subequations}
\label{eq:activationFunctions}
\begin{align}
\phi_1(x) =& \frac{e^2+1}{2e^2}\left(1 + \tanh{\frac{x + \rho_1}{\rho_1}} \right),
\label{eq:tanhActivation}
\\
\phi_2(x) =& 
\begin{cases}
1               & x > 0 \\
\exp{\left[\frac{1}{\rho_2^2} \left(1 - \frac{1}{1 - x^2} \right) \right]} & x \in (-1, 0]  \\
0               & x \leq -1 \\
\end{cases}
\label{eq:bumpActivation}
\end{align}
\end{subequations}
where $\rho_1\in\R$ and $\rho_2\in\R$ are positive scalars that control the sharpness of the approximation; 
smaller $\rho_1$ and $\rho_2$ lead to sharper approximations.
\cref{f:smoothApprox} illustrates their behavior with different values of $\rho$, where $\phi_{\tanh},\phi_{\mathrm{bump}}$ represent $\phi_1 , \phi_2$.

\cref{theorem:smoothApprox} formally states three key properties the proposed approach possesses.
The first property (1) is crucial to facilitating the numerical convergence of the solution method.
This property implies the smoothness of $u $ and $\dot{x}$, eliminating the need to divide the trajectory into sub-arcs.
The second property (2) is vital when applying to safety-critical systems where we do not want to compromise the state constraint satisfaction for any $\rho$.
The third property (3) ensures the convergence to the local optimum of the original problem.

\begin{lemma}
\label{theorem:smoothApprox}
The smooth approximation $\tilde{\mu}^*$ as defined in \cref{eq:muApprox,eq:activationFunctions} has the following properties:
\begin{enumerate}
    \item $\tilde{\mu}^*$ is continuously differentiable over an optimal trajectory including at corners;
    \item $\tilde{\mu}^*$ respects the inequality constraint regardless of the values of $\rho_1$ and $\rho_2$; and 
    \item $\tilde{\mu}^*$ approaches $\mu^*$ defined in \cref{eq:discontinuousConstraintMultiplier} as $\rho_1,\rho_2\to {+}0$.
\end{enumerate}
\end{lemma}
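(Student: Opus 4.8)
The plan is to treat the three properties separately, since each isolates a different feature of the construction in \cref{eq:muApprox,eq:activationFunctions}. For property (1) I would first argue that $\phi_1$ and $\phi_2$ are each $C^\infty$ as scalar functions and then conclude by composition. Since $\phi_1$ in \cref{eq:tanhActivation} is $\tanh$ composed with an affine map, it is $C^\infty$ at once. The delicate piece is $\phi_2$ in \cref{eq:bumpActivation}: I would verify that the three branches glue $C^\infty$-smoothly at the junctions $x=0$ and $x=-1$. At $x=0$ the middle branch equals $\exp(0)=1$ with all one-sided derivatives vanishing, matching the constant branch $x>0$; at $x=-1$ the standard bump-function estimate (the exponent diverges to $-\infty$ faster than any power of $1-x^2$) shows every one-sided derivative tends to $0$, matching the branch $x\le-1$. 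Combining this with the facts that $h$ is continuously differentiable (\cref{lemma:uniqueConstrainedSolution}), that $S$ is $C^2$, and that $S^{(1)}_o=S^{(1)}(x,g(x,\lambda,t),t)$ is continuously differentiable (as $f_0,F,S$ are twice differentiable and $g$ is $C^1$), the product $\tilde{\mu}^*=h\,\phi_1(S)\,\phi_2(S^{(1)}_o)$ is $C^1$ in $(x,\lambda,t)$ and hence $C^1$ in $t$ along any trajectory of the smoothed dynamics. The point worth stressing is that this holds at what were formerly corners precisely because the indicator-type switches of \cref{eq:discontinuousConstraintMultiplier} have been replaced by globally smooth transitions, so no sub-arc decomposition is needed.

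For property (2) I would show that the smoothed control never lets $S$ cross zero, independently of $\rho_1,\rho_2$. The key observation is that the normalizing constant in \cref{eq:tanhActivation} is chosen so that $\phi_1(0)=1$ for every $\rho_1$ (the $+\rho_1$ shift makes the $\tanh$ argument equal $1$ at $S=0$), while $\phi_2$ takes values in $[0,1]$ and equals $1$ on $\{x\ge 0\}$ for every $\rho_2$. Hence at any point with $S=0$ we have $\tilde{\mu}^*=h\,\phi_2(S^{(1)}_o)$. I would then invoke the monotonicity established within the proof of \cref{lemma:uniqueConstrainedSolution}, namely that $S^{(1)}$ evaluated under $g(x,\lambda+\nu S_x^\top,t)$ is strictly decreasing in $\nu$ with unique root $\nu=h$, so that $S^{(1)}\le 0$ is equivalent to $\nu\ge h$. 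Splitting on the sign of $S^{(1)}_o$ then finishes the argument: if $S^{(1)}_o\ge 0$ then $h\ge 0$ and $\phi_2(S^{(1)}_o)=1$, so $\tilde{\mu}^*=h$ and $S^{(1)}=0$; if $S^{(1)}_o<0$ then $h<0$ and $\phi_2(S^{(1)}_o)\in[0,1)$, so $\tilde{\mu}^*=h\,\phi_2(S^{(1)}_o)\ge h$, again giving $S^{(1)}\le 0$. In either case $S=0$ forces $\dot S=S^{(1)}\le 0$, so $S$ cannot increase through the boundary, which is the claimed constraint satisfaction for all $\rho_1,\rho_2$.

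For property (3) I would establish pointwise convergence $\tilde{\mu}^*\to\mu^*$ by a case analysis matching the branches of \cref{eq:discontinuousConstraintMultiplier}. On the interior of a constrained arc ($S=0$ and $S^{(1)}_o\ge 0$) we have $\phi_1(S)=\phi_1(0)=1$ and $\phi_2(S^{(1)}_o)=1$ exactly, so $\tilde{\mu}^*=h=\mu^*$ for every $\rho$ and no limit is even required. On an unconstrained arc ($S<0$) the $\tanh$ argument $S/\rho_1+1\to-\infty$ as $\rho_1\to{+}0$, so $\phi_1(S)\to 0$ and $\tilde{\mu}^*\to 0=\mu^*$. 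Finally, at points with $S=0$ but $S^{(1)}_o<0$ (where $\mu^*=0$), the exponent in \cref{eq:bumpActivation} diverges to $-\infty$ as $\rho_2\to{+}0$ (or $\phi_2\equiv 0$ already when $S^{(1)}_o\le-1$), so $\phi_2(S^{(1)}_o)\to 0$ and again $\tilde{\mu}^*\to 0=\mu^*$. I would flag explicitly that this is convergence of $\tilde{\mu}^*$ as a function of $(x,\lambda,t)$; the accompanying convergence of the smoothed trajectory to the true optimal one is a separate matter that motivates the continuation scheme rather than part of this statement.

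I expect the main obstacle to be property (2): it is where the specific algebraic design of the activation functions (the exact value $\phi_1(0)=1$ and the range of $\phi_2$) must be combined with the monotonicity of $S^{(1)}$ in the multiplier from \cref{lemma:uniqueConstrainedSolution}, and where the sign bookkeeping on $h$ in the $S^{(1)}_o<0$ case has to be handled carefully. The smoothness checks in property (1) are standard once the bump-function junction at $x=-1$ is treated, and property (3) reduces to elementary one-sided limits.
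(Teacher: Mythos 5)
Your proposal is correct in substance and follows the same three-part structure as the paper's proof, but you should be aware of one factual error and one place where you actually prove more than the paper does. The error is in property (1): your claim that the middle branch of $\phi_2$ glues to the constant branch at $x=0$ ``with all one-sided derivatives vanishing'' is false. Writing $E(x) = \rho_2^{-2}\bigl(1 - (1-x^2)^{-1}\bigr)$, one has $E(0)=E'(0)=0$ but $E''(0) = -2/\rho_2^2 \neq 0$, so $\phi_2''(0^-) = -2/\rho_2^2$ while the right branch has second derivative $0$; thus $\phi_2$ is $C^1$ but \emph{not} $C^2$ (let alone $C^\infty$) at $x=0$. This does not sink your proof, because the lemma asserts only continuous differentiability --- which is exactly what the paper checks, stating simply that $\phi_1$ and $\phi_2$ are continuously differentiable and composing with the $C^1$ functions $h$, $S$, and $S^{(1)}_o$ --- but you should weaken the claim to $C^1$ at $x=0$ (the $C^\infty$ gluing at $x=-1$ is the standard bump-function fact and is fine).

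On property (2), your argument is genuinely more complete than the paper's. The paper verifies only the constrained-arc case $S=0 \land S^{(1)}_o > 0$, where $\phi_1=\phi_2=1$ gives $\tilde{\mu}^*=h$ and hence $S^{(1)}=0$; it does not discuss $S=0$ with $S^{(1)}_o \le 0$. You close that case by invoking the strict monotonicity of $S^{(1)}$ in the multiplier from the proof of \cref{lemma:uniqueConstrainedSolution} to get $h<0$, $\tilde{\mu}^* = h\,\phi_2(S^{(1)}_o) \ge h$, and thus $S^{(1)} \le 0$, followed by a Nagumo-type invariance conclusion. This buys an actual forward-invariance statement rather than just feasibility on constrained arcs, at the cost of one tacit assumption: that the root $h(x,\lambda,t)$ exists (and is negative) at points with $S^{(1)}_o < 0$, whereas \cref{lemma:uniqueConstrainedSolution} establishes existence only on constrained arcs. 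Since the definition of $\tilde{\mu}^*$ in \cref{eq:muApprox} already evaluates $h$ at all points of the trajectory, the paper implicitly makes the same global-definedness assumption, so your use is consistent with it; still, it deserves an explicit remark (it holds automatically when $S^{(1)}$ is affine in $\mu$, as in the paper's quadratic-cost example). Your property (3) matches the paper's facts (a)--(d) case for case, with the paper's fact (e) ($h=0$ when $S^{(1)}_o = 0$) absorbed into your observation that $\phi_2(0)=1$ gives exact equality $\tilde{\mu}^* = h = \mu^*$ there.
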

\begin{proof}
(1) is shown first.
It is clear from \cref{eq:activationFunctions} that $\phi_1 $ and $\phi_2 $ are continuously differentiable in $S$ and $h$, respectively, including at corners.
$S$ and $h$ are both continuously differentiable as discussed earlier, thus proving (1).

For (2), it is clear from \cref{eq:activationFunctions} that, when $S =0 \land S^{(1)}_o >0$, $\phi_1(S)$ and $\phi_2(S^{(1)}_o)$ take both unity independent of the values of $\rho_1$ and $\rho_2$, and hence $\tilde{\mu}^* = h $, yielding $S^{(1)}=0 $, which ensures the constraint satisfaction on constrained arcs.

(3) is verified by showing the following facts:
(a) $\lim_{\rho_1\to{+}0} \phi_1(S) = 0 $ if $S<0 $;
(b) $\phi_1(S) = 1 $ if $S=0 $ independent from $\rho_1$;
(c) $\phi_2(S^{(1)}_o) = 1 $ if $S^{(1)}_o\geq 0 $ independent from $\rho_2$;
(d) $\lim_{\rho_2\to{+}0} \phi_2(S^{(1)}_o)=0$ if $S^{(1)}_o < 0$;
(e) $h = 0$ if $S^{(1)}_o = 0$.
(a) and (b) are shown by noting that $\tanh{(-\infty)}=-1$ and that $ \frac{e^2+1}{2e^2} = \frac{1}{1 + \tanh{1}}$, respectively.
(c) is true by definition, see \cref{eq:bumpActivation}.
(d) is shown by noting that $\lim_{\rho_2\to{+}0} \phi_2(S^{(1)}_o)=0$ if $S^{(1)}_o < 0$ ($\because \exp{(-\infty)} = 0 $).
(e) is shown by noting that the unique $\mu^*$ that simultaneously satisfies $S^{(1)}_o=0 $ and $S^{(1)}=0 $ is necessarily zero, i.e., $\mu^* = h = 0$ ($\because S^{(1)}_o = S_x f_0 + S_xFg(x,\lambda,t) + S_t,\ S^{(1)} = S_x f_0 + S_xFg(x,\lambda + \mu^* S_x^\top,t) + S_t$).
\end{proof}

\begin{figure}[tb] 
    \centering
    \subfigure[\label{f:smoothApprox_1} $\mathrm{tanh}(\cdot)$ activation, \cref{eq:tanhActivation}. The same legend applies to (b) and (c).]{
        \includegraphics[width=\linewidth]{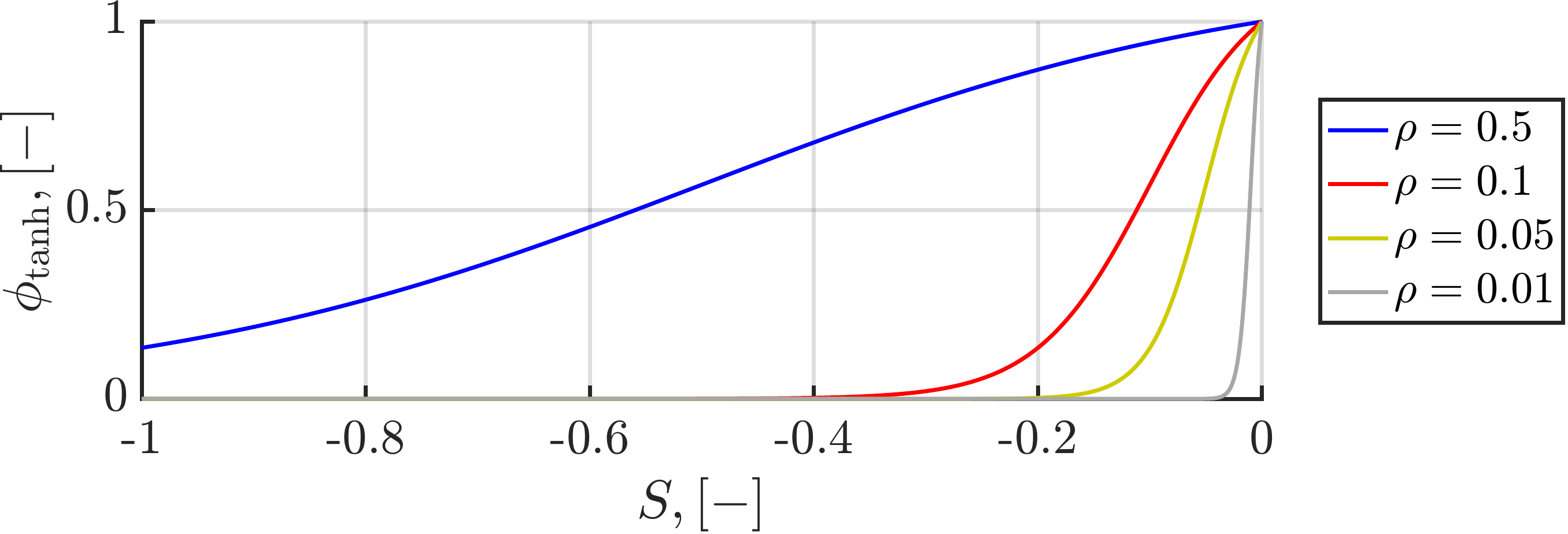}}
    \subfigure[\label{f:smoothApprox_2} Bump function, \cref{eq:bumpActivation}]{
        \includegraphics[width=0.475\linewidth]{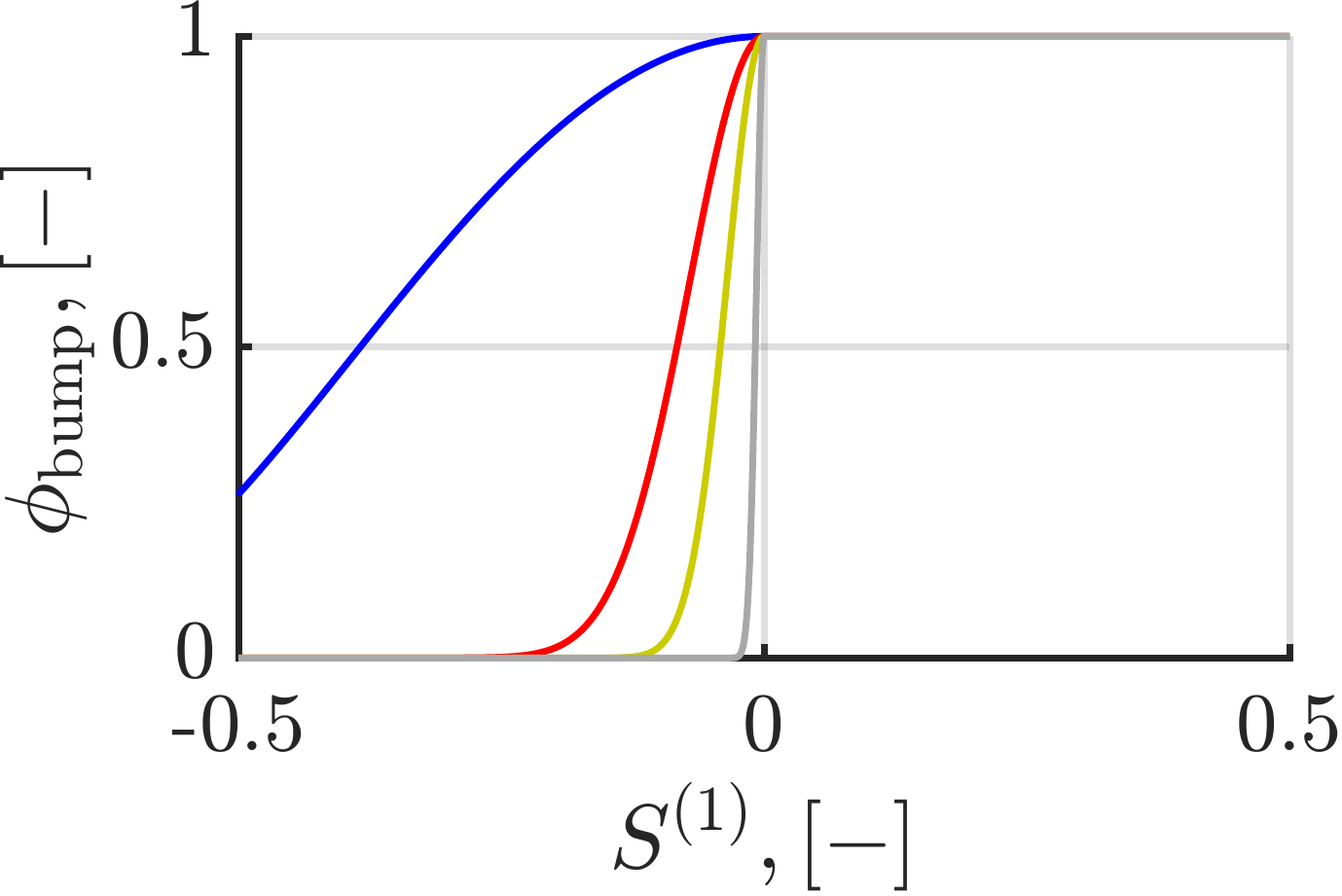}}
    \subfigure[\label{f:diracApprox} Costate jump approx, \cref{eq:approxDirac}]{
        \includegraphics[width=0.475\linewidth]{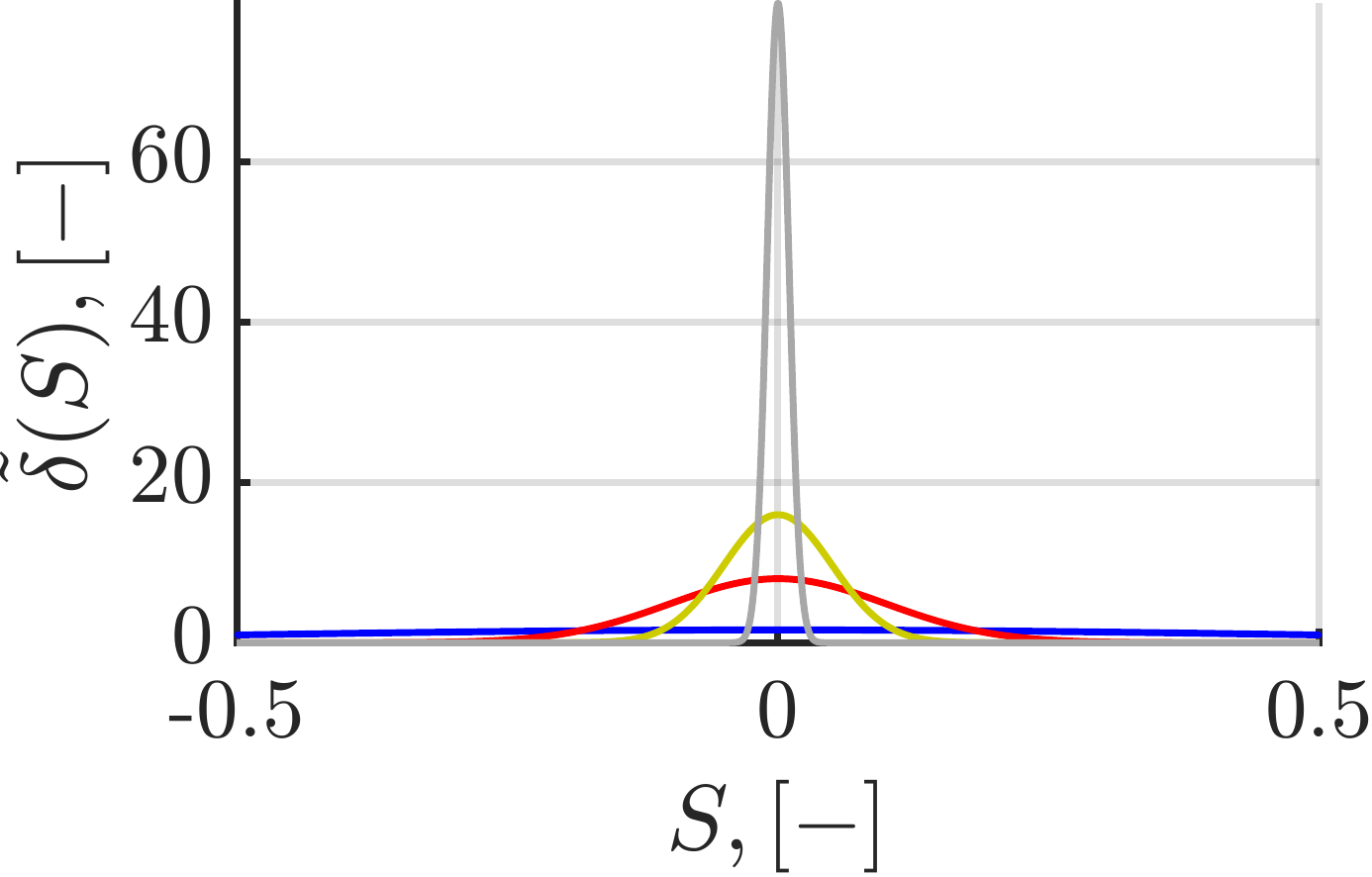}}
    \vspace{-10pt}
    \caption{\label{f:smoothApprox} Activation functions with various sharpness parameters $\rho$ 
    }
    \vspace{-15pt}
\end{figure}

\subsection{Smooth Approximation of Discontinuous Control}

With the smooth multiplier established in \cref{sec:smoothApproxMultiplier}, it is straightforward to approximate $u^*$ via a smooth function:
\begin{align} \begin{aligned}
\tilde{u}^* = g(x, \lambda + \tilde{\mu}^*S_x^\top , t),
\label{eq:controlApprox}
\end{aligned} \end{align}
which has the following desired properties due to \cref{theorem:smoothApprox}:
(1) $\tilde{u}^*$ is continuously differentiable;
(2) $\tilde{u}^*$ respects the inequality constraint regardless of the values of $\rho_1$ and $\rho_2$;
(3) $\tilde{u}^*$ asymptotically approaches $u^*$ as $\rho_1,\rho_2\to +0$.

\subsection{Smooth Approximation of Jump in Costate \& Hamiltonian}

Recalling \cref{eq:costateJump}, the discontinuous change in costate at the $i$-th corner, denoted by $\Delta \lambda_i $, is given by
\begin{align}
\Delta \lambda_i  
= -\mu^*(t_i^+) [S_x(t_i)]^\top 
= -h(x(t_i), \lambda(t_i), t_i) [S_x(t_i)]^\top 
\nonumber
\end{align}
where $\mu^*(t_i^+)$ is replaced by $h(\cdot)$ due to \cref{eq:discontinuousConstraintMultiplier}.
The following formulation directly applies to $\Delta H_i$ as well.

By using the Dirac delta function, $\Delta \lambda_i $ is equivalent to
\begin{align}
\Delta \lambda_i &
= - \int_{-\infty}^{\infty}h(x,\lambda,t) [S_x(t)]^\top \delta(t - t_i)\diff t
\label{eq:jumpCondition}
\end{align}
where $\delta(x)$ satisfies the following conditions:
\begin{align}
&\delta(x) = 
\begin{cases}
\infty & x = 0 \\
0 & x \neq 0
\end{cases}
\label{eq:diracProperty}
,\quad
\int_{-\infty}^{\infty} \delta(x) \diff x = 1
\end{align}
Noting that $t=t_i$ corresponds to a time when $S=0$ and that $S$ never becomes positive under $u^* $ with $\mu^*$, the integral of the Dirac delta can be expressed in terms of $S$ as:
\begin{align} \begin{aligned}
\int_{-\infty}^{\infty}\delta(t - t_i)\diff t
=
2\int_{-\infty}^{0}\delta(S) \diff S
=
2\int_{-\infty}^{t_i}\delta(S(t)) S^{(1)} \diff t
\nonumber
\end{aligned} \end{align}
where $\diff S = \dot{S}\diff t = S^{(1)}\diff t $. 

Obviously, $\delta(x)$ is discontinuous at $t_i$ and needs a smooth approximation for integrating the costate differential equation across $t_i$.
Thus, we smoothly approximate $\delta(x)$ as follows:
\begin{align}
\tilde{\delta}(x) \triangleq
\frac{1}{\rho_3 \sqrt{2\pi} } \exp{\left(-\frac{1}{2} \frac{x^2}{\rho_3^2} \right) },
\label{eq:approxDirac}
\end{align}
which corresponds to the probability distribution function (pdf) of a zero-mean normal distribution with standard deviation $\rho_3$, where $\rho_3$ serves as another sharpness parameter.

It is straightforward to verify that $\tilde{\delta}(x) $ given by \cref{eq:approxDirac} has the following desirable properties:
$\tilde{\delta}(x)$ asymptotically satisfies the left of \cref{eq:diracProperty} as $\rho_3 \to 0$; and
$\tilde{\delta}(x)$ always satisfies the right of \cref{eq:diracProperty} regardless of the value of $\rho_3$, since $\tilde{\delta}(\cdot)$ represents a pdf. 
\cref{f:diracApprox} illustrates the behavior of $ \tilde{\delta}(S)$ with respect to various $\rho_3$.

Using \cref{eq:approxDirac}, the jump condition \cref{eq:jumpCondition} is smoothly approximated via $\widetilde{\Delta \lambda}_i$, calculated as:
\begin{align} \begin{aligned}
\widetilde{\Delta \lambda}_i = - 2 \int_{-\infty}^{t_i}h(x,\lambda,t) [S_x(t)]^\top \tilde{\delta}(S(t)) S^{(1)} \diff t.
\label{eq:jumpConditionApprox}
\end{aligned} \end{align}
Since \cref{eq:jumpConditionApprox} must hold for each $i$-th corner and takes the form of integral with respect to $t$, this effect can be incorporated into \cref{eq:unifiedCostateDynamics}, yielding:
\begin{align} \begin{aligned}
\dot{{\lambda}} =& 
- [\nabla_x H(x, \tilde{u}^*, {\lambda}, \tilde{\mu}^*, t) ]^\top - 2h(\cdot) \tilde{\delta}(S) S^{(1)} S_x^\top
\label{eq:costateDynamicsWithJumps}
\end{aligned} \end{align}
The jump in $H$ can be similarly expressed with a substitution of $S_t$ into $S_x$, resulting an ordinary differential equation for $H$ that takes into account the jump conditions at corners.

Note that the approximation quality of $\Delta \lambda_i$ via \cref{eq:costateDynamicsWithJumps} does not affect the feasibility of the problem.
The feasibility is guaranteed by the second property of \cref{theorem:smoothApprox}.

\subsection{Smooth State-constrained Optimal Control Problem}

Applying the proposed solution method transforms \cref{prob:originalOCP} into \cref{prob:smoothOCP}, which no longer involves discontinuities in control and costate and represents a smooth TPBVP.

\begin{problem}[Smooth State-constrained Problem]
\label{prob:smoothOCP}
For given sharpness parameters $\{\rho_1,\rho_2,\rho_3\} $, find the initial costate $\lambda(0) $ (and $T$ for variable-time problems) such that satisfy the transversality conditions subject to the state dynamics \cref{eq:dynamics} and costate dynamics \cref{eq:costateDynamicsWithJumps} under control \cref{eq:controlApprox}.
\end{problem}

Due to the smoothness, we can solve \cref{prob:smoothOCP} by 
the standard indirect shooting with no need for dividing the problem into sub-arcs.
For completeness, the standard indirect shooting procedure can be described as follows:
(1) define an augmented state, $X = [x^\top, {\lambda}^\top]^\top$, which obeys:
\begin{align}
\hspace{-1pt}
\dot{X} = 
\begin{bmatrix}
f(x, \tilde{u}^*, t) \\
- [\nabla_x H(x, \tilde{u}^*, {\lambda}, \tilde{\mu}^*, t) ]^\top - 2\tilde{\mu}^* \tilde{\delta}(S) S^{(1)} S_x^\top
\end{bmatrix};
\label{eq:augODE}
\end{align}
(2) set up a shooting function $\Psi(\lambda(0))$ which, for given $\lambda(0)$, integrates \cref{eq:augODE} from $t=0$ to $T$ and evaluates the transversality conditions;
(3) use a nonlinear root-finding solver to find $\lambda(0)$ that yields $\Psi(\cdot) = 0$.

To approach the optimal solution of the original problem, we perform continuation over a decreasing sequence of $\rho$'s, where $\rho\triangleq\rho_1=\rho_2=\rho_3$ is used for simple algorithm design.
That is, after solving \cref{prob:smoothOCP} with $\rho^{(i)}$, the solution is used as the initial guess of $\lambda(0)$ for the next iteration with $\rho^{(i+1)}$, where $\rho^{(i)}$ decreases as $i$ increases.
This approach takes advantage of the third property in \cref{theorem:smoothApprox} and the convergence property of $\tilde{\delta}(\cdot) $ to $\delta(\cdot)$.

\begin{remark}
\label{remark:constraintSatisfaction}
Each intermediate solution of the continuation procedure respects the state constraint regardless of the values of the sharpness parameters $\{\rho_1,\rho_2,\rho_3\} $ (i.e., anytime algorithm) due to the second property of \cref{theorem:smoothApprox}, which is often a desirable property for safety-critical systems.
\end{remark}



\section{Numerical Example}

We consider a 2-D orbit transfer problem.
Let $r,v\in\R^2 $ be the position and velocity of the spacecraft, $x \in \R^4 $ be the state vector, and $u\in\R^2 $ the control vector, defined as:
\begin{align}
x = 
\begin{bmatrix}
r \\
v
\end{bmatrix}
,\quad
r =
\begin{bmatrix}
r_1 \\
r_2
\end{bmatrix}
,\quad
v =
\begin{bmatrix}
v_1 \\
v_2
\end{bmatrix}
,\quad
u =
\begin{bmatrix}
u_1 \\
u_2
\end{bmatrix}
\end{align}
Orbital dynamics with gravity parameter $\mu_g$ are given by:
\begin{align}
f(x,u) =
f_0(x) + Bu,
\ 
f_0 =
\begin{bmatrix}
v \\
-\frac{\mu_g}{\norm{r}_2^3} r 
\end{bmatrix}
,\ 
B =
\begin{bmatrix}
0_{2\times2} \\
I_2
\end{bmatrix}
\nonumber
\end{align}
where the canonical unit is used so that $\norm{r(t=0)}_2 = \mu_g = 1$.
$\norm{\cdot}_2$ represents the 2-norm of a vector.
The cost function, boundary conditions, and state constraint are defined as:
\begin{align} \begin{aligned}
L = \frac{1}{2} \norm{u}_2^2
,\ 
x(0) = x_0
,\ 
x(T) = x_T
,\ 
S = p_{\min} - p(x),
\nonumber
\end{aligned} \end{align}
where
$p(x) = (r_1 v_2 - r_2 v_1)^2 / \mu_g$ is the semilatus rectum of an orbit.
$x_0, x_T, T, p_{\min}$ are given and defined as:
$x_0 = [1, 0, 0, 1]^\top,\ x_T = [3\sqrt{3}/2, 1.5, - 1/(2\sqrt{3}), 0.5]^\top,\ T = 3\pi,\ p_{\min} = 0.9 $.
The necessary conditions are derived as:
\begin{align} \begin{aligned}
u^* = - B^\top (\lambda + \mu^* S_x^\top),
\quad
\mu^* = \frac{S_x f_0 - S_x B B^\top \lambda}{S_x B B^\top S_x^\top}
\end{aligned} \end{align}
Note that this problem is a special case of the example in \cref{sec:analyticalExample} with $F = B, S_t = 0, c = 0, R = \frac{1}{2} I_2 $, and $P=0$.
The transversality condition is $x(T) - x_T = 0 $.

\begin{figure}[tb] 
    \centering
    \subfigure[\label{f:ex1-1} Optimal trajectories for various $\rho$. The same legend applies to (b)-(e).
    The filled circle and triangle represent the initial and final positions.]{
        \includegraphics[width=\linewidth]{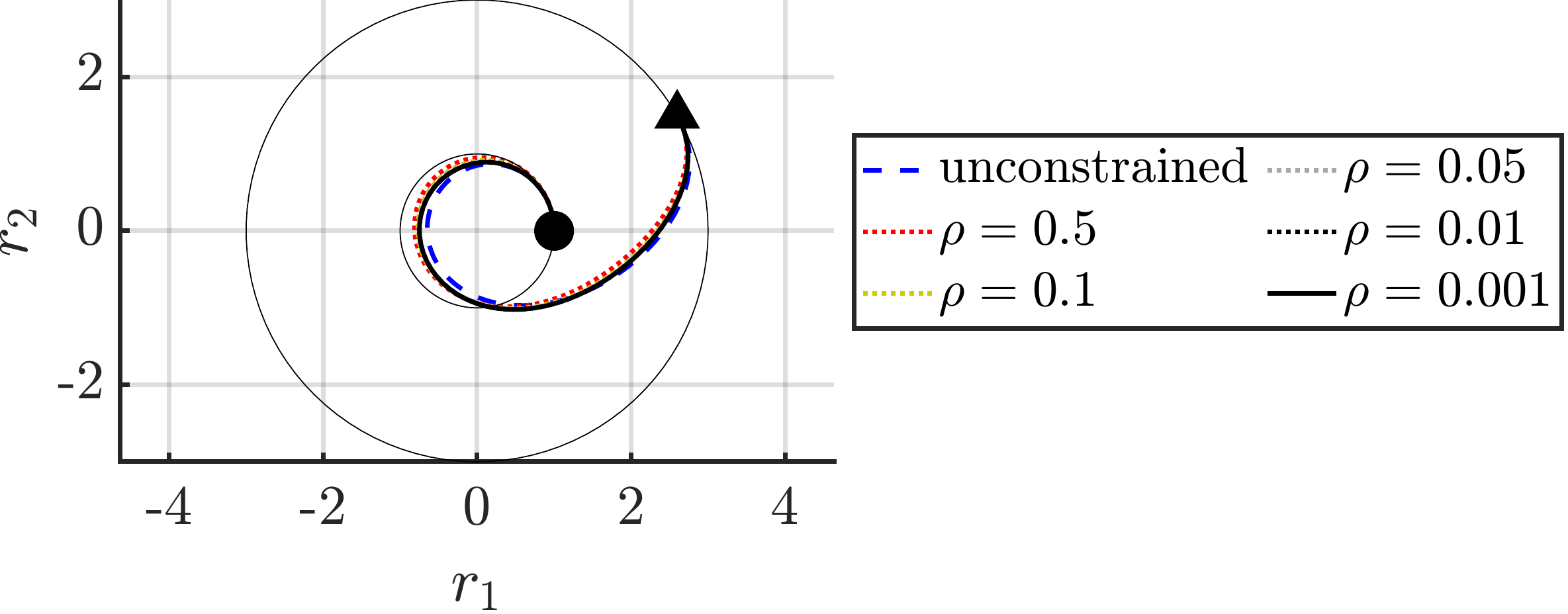}}
    \subfigure[\label{f:ex1-2} Optimal control profiles]{
        \includegraphics[width=0.475\linewidth]{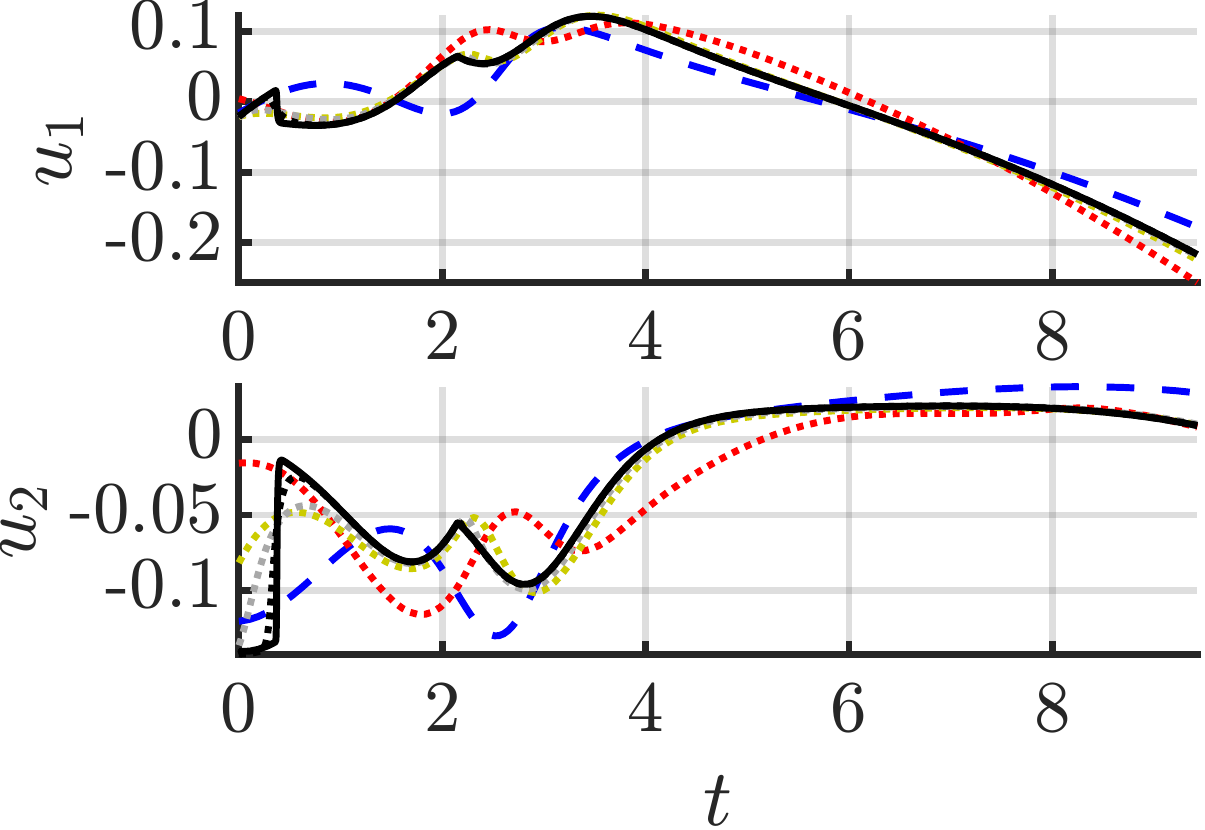}}
    \subfigure[\label{f:ex1-3} Optimal costate profiles]{
        \includegraphics[width=0.475\linewidth]{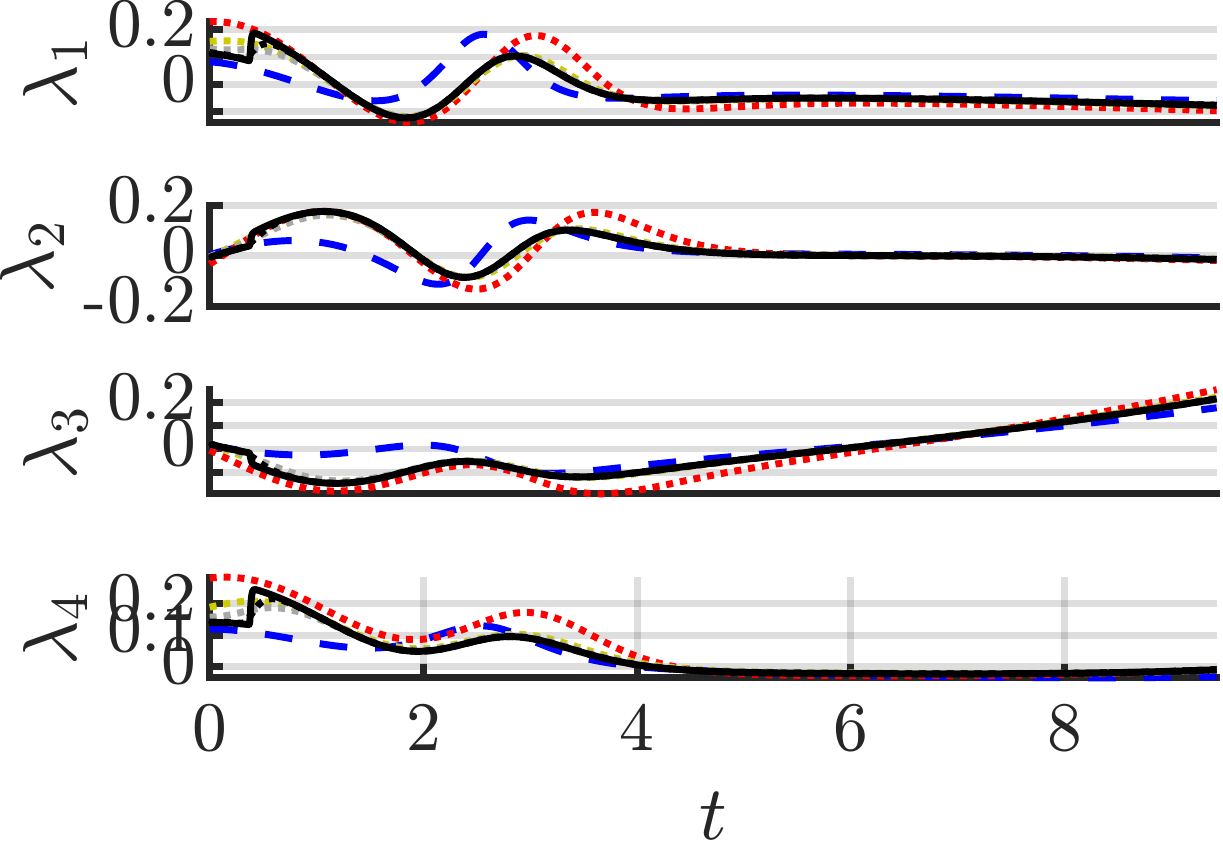}}
    \subfigure[\label{f:ex1-4} Smoothed $\mu$ and costate jump]{
        \includegraphics[width=0.475\linewidth]{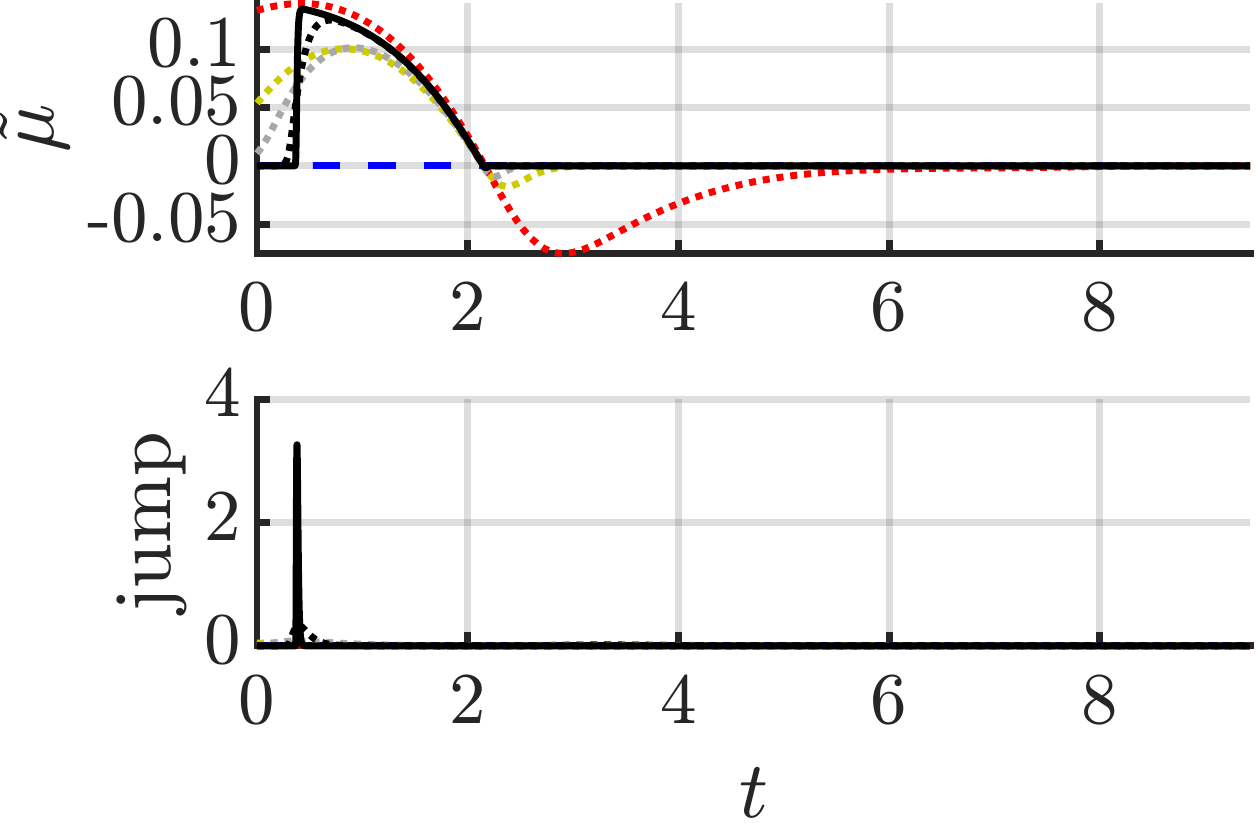}}
    \subfigure[\label{f:ex1-5} State constraint and derivative]{
        \includegraphics[width=0.475\linewidth]{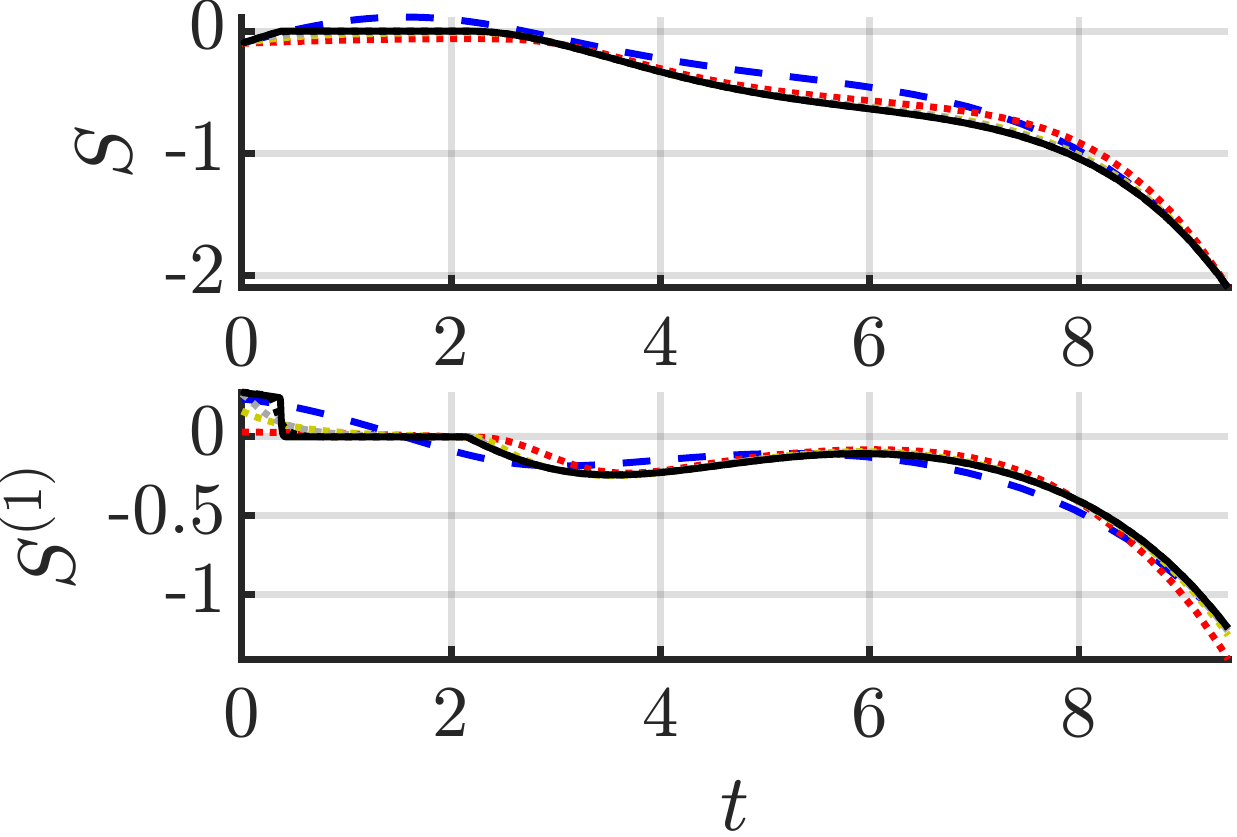}}
    \vspace{-10pt}
    \caption{\label{f:Ex1-1} Numerical example results with various $\rho$.
    }
    \vspace{-15pt}
\end{figure}

We solve this problem in the form of \cref{prob:smoothOCP} by using Matlab's \texttt{fsolve},
where \texttt{ode45} is used for the integration of \cref{eq:augODE}.
The tolerances used are 
$\mathtt{Opt/FunTol}=10^{-8}$ for \texttt{fsolve}
and
$\mathtt{Rel/AbsTol}=10^{-8}$ for \texttt{ode45}.

The solutions for $\rho^{(i)} \in \{0.5, 0.1, 5\times 10^{-2}, 10^{-2}, 5\times 10^{-3}, 10^{-3} \}$ are shown in \cref{f:Ex1-1}, including the unconstrained solution for comparison.
\cref{f:ex1-2,f:ex1-3} indicate that control and costate experience a discontinuity around $t=0.55$, which is well-modeled by the smooth approximation.
\cref{f:ex1-2} shows a significant difference in the optimal control profiles for the constrained and unconstrained problems.
\cref{f:ex1-4} shows the behavior of $\tilde{\mu}^*$ (constraint multiplier) and $2h\tilde{\delta}(S) S^{(1)} $  (time derivative of costate jump; see \cref{eq:costateDynamicsWithJumps}), which numerically verifies the analysis in \cref{sec:smoothApprox}.
\cref{f:ex1-5} plots $S$ and $S^{(1)} $ over $t$, confirming 
that intermediate solutions in the continuation respect the state constraint, 
even for a blunt sharpness parameter $\rho=0.5$, which demonstrates \cref{remark:constraintSatisfaction}.

\section{Conclusions}
In this paper, a new indirect solution method for state-constrained optimal control problems is presented to address the long-standing issue of discontinuous control and costate due to state inequality constraints.
The proposed solution method is enabled by re-examining the necessary conditions of optimality for the constrained control problems and deriving a unifying form of necessary conditions based on the uniqueness of the optimal control and constraint multiplier on constrained arcs.
In contrast to classical indirect solution methods, the proposed method transforms the originally discontinuous problems into smooth TPBVPs, eliminating the need for \textit{a priori} knowledge about the optimal solution structure, which is usually unknown.
A numerical example demonstrates the proposed method and its anytime algorithm-like property.
A key next direction of this work is to generalize the framework to higher-order state constraints.


\bibliographystyle{ieeetr}
\bibliography{../../../../../../utility/latex/ref/zotero/library}

\end{document}